\newtheorem{theorem}{Theorem}
\newtheorem{acknowledgement}[theorem]{Acknowledgement}
\newtheorem{corollary}[theorem]{Corollary}
\newtheorem{lemma}[theorem]{Lemma}
\newtheorem{remark}[theorem]{Remark}
\newenvironment{proof}[1][Proof]{\noindent\textbf{#1.} }{\ \rule{0.5em}{0.5em}}
\begin{document}

\title{On The Diophantine Equation \\
$x^{2}+7^{\alpha }\cdot 11^{\beta }=\allowbreak y^{n}$}
\date{\today }
\author{G\"{O}KHAN SOYDAN}
\maketitle

\begin{abstract}
In this paper, we give all the solutions of the Diophantine equation $%
x^{2}+7^{\alpha }\cdot 11^{\beta }=\allowbreak y^{n},$ in nonnegative
integers $\alpha ,~\beta ,~x,~y,~n\geq 3$ with $x$ and $y$ coprime, except
for the case when $\alpha .x$ is odd and $\beta ~$is even.
\end{abstract}

\medskip

\textit{Keywords: } Exponential equations, Primitive divisors of Lucas
sequences

\medskip

\textit{2010 Mathematics Subject Classification : }111D61, 11D99.

\section{Introduction}

The Diophantine equation 
\begin{equation}
x^{2}+C=y^{n},\qquad n\geq 3  \label{eq:(1.1)}
\end{equation}%
in positive integers $x$,$~y,~n$ for given $C$ has a rich history. In 1850,
Lebesgue \cite{Lebesque} proved that the above equation has no solutions
when $C=1$. The title equation is a special case of the Diophantine equation 
$ay^{2}+by+c=dx^{n},$ where $a\neq 0,b,~c$ and $d\neq 0$ are integers with $%
b^{2}-4ac\neq 0,~$which has at most finitely many integer solutions $%
x,~y,~n\geq 3$ (see \cite{Landau}). \ In 1993, J.H.E. Cohn \cite{Cohn1}
solved the Diophantine equation \eqref{eq:(1.1)} for several values of the
parameter $C$ in the range $1\leq C\leq 100.$ The solution for the cases $%
C=74,86~$ was completed by Mignotte and de Weger \cite{Mignotte} which had
not been covered by Cohn (indeed, Cohn solved these two equations of type %
\eqref{eq:(1.1)} except for $p=5$, in which case difficulties occur as the
class numbers of the corresponding imaginary quadratic fields are divisible
by $5$). In \cite{Bugeaud2},~Bugeaud, Mignotte and Siksek improved modular
methods to solve completely \eqref{eq:(1.1)}$~$when $n\geq 3$, for $C$ in
the range $[1,100]$. So they covered the remaining cases.

Different types of the Diophantine equation \eqref{eq:(1.1)} were studied
also by various mathematicians. For effectively computable upper bounds for
the exponent $n$, we refer to \cite{Berczes2} and \cite{Gyry}. However,
these estimates are based on Baker's theory of lower bounds for linear forms
in logarithms of algebraic numbers, so they are quite impractical. In \cite%
{Tengely1}, Tengely gave a method to solve the equation $x^{2}+a^{2}=y^{n}$
and applied it to $3\leq a\leq 501$, so it includes $x^{2}+7^{2}=y^{n}$ and $%
x^{2}+11^{2}=y^{n}$. In \cite{FSAbu9}, the equation $x^{2}+C=2y^{n}$ with $C 
$ a fixed positive integer and under the similar restrictions $n\geq 3$ and $%
\gcd (x,y)=1$ was studied. Recently, Luca, Tengely and Togb\'{e} studied the
Diophantine equation$~x^{2}+C=4y^{n}$ in nonnegative integers$~x,y$, $n\geq
3 $ with $x$ and $y$ coprime for various shapes of the positive integer $C~$%
in \cite{Luca7}.

In recent years, a different form of the above equation has been considered,
namely where $C$ is a power of a fixed prime. In \cite{Abu1997}, the
equation $x^{2}+2^{k}=y^{n}~$was studied under some conditions by Arif and
Muriefah. A conjecture of Cohn (see \cite{Cohn2}) was verified saying that $%
x^{2}+2^{k}=y^{n}~$has~no~solutions with $x$ odd and even $k>2$ by Le \cite%
{Le1}. In $\cite{Arif1}$, Abu Muriefah and Arif, gave all the solutions\ of $%
x^{2}+3^{k}=y^{n}$with $k$ odd and, Luca \cite{Luca3}, gave all the
solutions with $k$ even. Again the same equation was independently solved in
2008 by Liqun in \cite{Liqun1} for both odd and even $m$. All solutions of \ 
$x^{2}+5^{k}=y^{n}~$are given with k odd in \cite{Arif2} and with k even in 
\cite{FSAbu7}. Liqun solves the same equation again in 2009, in \cite{Liqun2}%
. Recently, B\'{e}rczes and Pink \cite{Berczes}, gave all the solutions of
the Diophantine equation \eqref{eq:(1.1)} when $C=p^{k}$ and $k$ is even,
where $p$ is any prime in the interval $[2,100]$.

The last variant of the Diophantine equation \eqref{eq:(1.1)} in which $C$
is a product of at least two prime powers were studied in some recent
papers. In 2002, Luca gave complete solution of $x^{2}+2^{a}.3^{b}=y^{n}~$in 
$\cite{Luca2}$. Since then, in 2006, all the solutions of the Diophantine
equation $x^{2}+2^{a}.5^{b}=y^{n}$ were found by Luca and Togb\'{e} in $\cite%
{Luca2}$. In 2008, the equations $x^{2}+5^{a}.13^{b}=y^{n}$ and $%
x^{2}+2^{a}5^{b}.13^{c}=y^{n}$ were solved in $\cite{FSAbu3}$ and $\cite%
{Luca5}$. Recently, in \cite{CDLPS} and \cite{CDILS}, complete solutions of
the equations$~x^{2}+2^{a}.11^{b}=y^{n}~$and$%
~x^{2}+2^{a}.3^{b}.11^{c}=y^{n}~ $were found. In \cite{CDST}, the complete
solution $(n,a,b,x,y)~$of the equation $x^{2}+5^{a}.11^{b}=y^{n}$~when $\gcd
(x,y)=1,~$except for the case when $xab$ is odd is given. In $\cite{Pink}$,$%
~ $Pink gave all the \textit{non-exceptional solutions} (in the terminology
of that paper) with $C=2^{a}.3^{b}.5^{c}.7^{d}.$ Note that finding all the 
\textit{exceptional solutions} of this equation seems to be a very difficult
task. A more exhaustive survey on this type of problems is $\cite{FSAbu8}.$

Here, we study the Diophantine equation 
\begin{equation}
x^{2}+7^{\alpha }\cdot 11^{\beta }=y^{n},\qquad gcd(x,y)=1\qquad {\text{%
\textrm{and}}}\qquad n\geq 3.  \label{eq:(1.2)}
\end{equation}%
There are three papers concerned with partial solutions for equation %
\eqref{eq:(1.2)}. The known results include the following theorem:

\begin{theorem}
\label{Theorem:1}$(i)~$If $\alpha $ is even and $\beta =0,~$then the only
integer solutions of the Diophantine equation%
\begin{equation*}
x^{2}+7^{2k}=y^{n}
\end{equation*}%
are%
\begin{eqnarray*}
n &=&3~~\ (x,y,k)=(524\cdot 7^{3\lambda },65\cdot 7^{2\lambda },1+3\lambda ),
\\
n &=&4\ \ \ (x,y,k)=(24\cdot 7^{2\lambda },5\cdot 7^{\lambda },1+2\lambda )\ 
\text{where}\ \lambda \geq 0~\text{is any integer.}
\end{eqnarray*}%
$(ii)$ If $\alpha =1$ and $\beta =0,$ then the only integer solutions $%
(x,y,n)$ to the generalized Ramanujan--Nagell equation%
\begin{equation*}
x^{2}+7=y^{n}
\end{equation*}
\end{theorem}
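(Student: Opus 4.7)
The plan is to treat the two parts of the theorem separately, since they concern equations of different flavor.

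For part (i), I would begin with a 7-adic reduction. Any prime dividing $\gcd(x, y)$ must equal $7$, so writing $x = 7^s x_0$, $y = 7^t y_0$ with $\gcd(x_0 y_0, 7) = 1$ and comparing valuations in $7^{2s} x_0^2 + 7^{2k} = 7^{tn} y_0^n$, the cases $2s \geq 2k$ collapse: the subcase $2s = 2k$ gives $x_0^2 + 1 = y_0^n$, excluded for $n \geq 3$ by Lebesgue's 1850 result cited in the introduction, while $2s > 2k$ gives $y_0^n - 1 = 7^{2(s-k)} x_0^2$, ruled out by factoring $y_0^n - 1$ and exploiting near-coprimality of its cyclotomic factors. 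What remains is $2s < 2k$, $tn = 2s$, and the primitive equation $x_0^2 + 7^{2(k-s)} = y_0^n$ with $\gcd(x_0, 7) = 1$; this recovers the parameter $\lambda$ of the stated families via $t = \lambda$, $s = \lambda n / 2$.

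In the primitive case, $7$ is inert in $\mathbb{Z}[i]$ (since $-1$ is a non-residue modulo $7$), so $(x + 7^k i, x - 7^k i)$ are coprime Gaussian integers up to a factor of $1 + i$ determined by the parity of $x$, and unique factorization yields $x + 7^k i = \varepsilon (a + bi)^n$ for a unit $\varepsilon$ and coprime integers $a, b$. Equating imaginary parts gives
\[
\pm 7^k \;=\; \mathrm{Im}\bigl((a + bi)^n\bigr),
\]
a homogeneous binary form of degree $n - 1$. For $n = 3$ this reads $\pm 7^k = b(3a^2 - b^2)$: near-coprimality forces $b \in \{\pm 1, \pm 7^k\}$, and a descent on $3a^2 = 7^{2k} - 1 = (7^k - 1)(7^k + 1)$ (whose factors share only a factor of $2$, with $3 \mid 7^k - 1$ because $7 \equiv 1 \pmod 3$) reduces to a Ramanujan--Nagell-type equation $7^k + 1 = 2 l^2$ that forces $k = 1$, $(a, b) = (\pm 4, \mp 7)$, and hence $(x, y) = (524, 65)$. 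For $n = 4$ the analogous split of $\pm 7^k = 4ab(a^2 - b^2)$ gives $(x, y, k) = (24, 5, 1)$. For $n \geq 5$, the sequence $u_m := \mathrm{Im}((a + bi)^m)$ is a Lucas sequence in the conjugate roots $a \pm bi$; by the Bilu--Hanrot--Voutier primitive divisor theorem, for $n > 30$ every $u_n$ has a primitive prime factor incompatible with $u_n = \pm 7^k$, and inspection of the finite exceptional table eliminates the remaining $n$.

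Part (ii), $x^2 + 7 = y^n$, is the classical generalized Ramanujan--Nagell equation. A mod-$8$ congruence (using $x$ odd, since otherwise $y^n \equiv 7 \pmod 4$ is impossible for $n \geq 2$) forces $y$ to be even. Factoring $(x + \sqrt{-7})(x - \sqrt{-7}) = y^n$ in the class-number-one ring $\mathbb{Z}[(1 + \sqrt{-7})/2]$ and writing $x + \sqrt{-7} = 2\,\pi^a\,\bar\pi^b$ with $\pi = (1 + \sqrt{-7})/2$ and $a + b = n$ leads to a Lehmer sequence constraint; primitive divisors reduce to finitely many $n$, yielding the classical list $(x, y, n) \in \{(1, 2, 3),\, (3, 2, 4),\, (5, 2, 5),\, (11, 2, 7),\, (181, 2, 15)\}$. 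The main obstacle across both parts is the binary-form endgame in part (i): controlling $b(3a^2 - b^2) = \pm 7^k$ and $4ab(a^2 - b^2) = \pm 7^k$ for arbitrarily large $k$, and matching the Lucas primitive-divisor argument for $n \geq 5$ against the BHV exceptional table; once these are handled, scaling primitive solutions back up to the full $\lambda$-indexed families via the 7-adic reduction is routine.
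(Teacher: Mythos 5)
First, a point of comparison: the paper does not actually prove this theorem --- it is a survey of known results, and the paper's entire ``proof'' is the citation ``See \cite{Luca4}, \cite{Bugeaud2} and \cite{CDLPS}.'' So your attempt is necessarily a different route, and the question is whether it is sound. It is not: the fatal gap is part (ii). All the solutions of $x^{2}+7=y^{n}$ have $x$ odd and $y$ even, and in that situation the two factors $x\pm\sqrt{-7}$ are \emph{not} coprime in $\mathbb{Z}[(1+\sqrt{-7})/2]$: both are divisible by both primes $\pi,\bar{\pi}$ above $2$. Carrying out the ideal factorization you began gives, for $y$ even, $x+\sqrt{-7}=\pm 2\pi^{nc-2}\gamma^{n}$ (with $2^{c}\,\|\,y$), where $\gamma$ is a \emph{non-unit} algebraic integer whenever $y$ has an odd prime factor. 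This is not a unit times an $n$-th power, so subtracting conjugates produces the $n$-th term of a recurrence that is \emph{not} a Lucas or Lehmer sequence, and the Bilu--Hanrot--Voutier theorem has nothing to bite on; your phrase ``leads to a Lehmer sequence constraint; primitive divisors reduce to finitely many $n$'' is exactly the step that fails. This is not a removable technicality: it is precisely the obstruction described in Remark \ref{Remark:2} of this paper, it is why the paper itself must exclude the case $\alpha x$ odd, $\beta$ even from Theorem \ref{Theorem:2}, and it is why part (ii) was only settled by Bugeaud--Mignotte--Siksek \cite{Bugeaud2} using the modular method (Frey curves, level-lowering) combined with Baker-type bounds. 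Primitive divisors do dispose of the sub-case in which $y$ is a power of $2$ (there $\gamma$ is a unit and one gets a Lucas term $L_{m}=\pm 1$, which has no primitive divisor), but proving that $y$ can have no odd prime factor is the hard content of part (ii), and your sketch does not address it at all.

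Part (i) has the right architecture (reduce to the $7$-coprime case, factor over $\mathbb{Z}[i]$, invoke Bilu--Hanrot--Voutier for prime $n\geq 5$ --- essentially the method of \cite{Luca4}), but three steps are wrong as written. (a) In the case $2s>2k$ you do not need cyclotomic factors --- that route leads into Nagell--Ljunggren equations such as $(y^{p}-1)/(y-1)=\square$, which \emph{do} have solutions (e.g.\ $y=3$, $p=5$), so ``near-coprimality'' alone proves nothing; instead observe that $7^{2(s-k)}x_{0}^{2}$ is itself a perfect square, so this case is again Lebesgue's equation $X^{2}+1=y_{0}^{n}$ \cite{Lebesque}, just like the case $2s=2k$. (b) For $n=4$ your equation $\pm 7^{k}=4ab(a^{2}-b^{2})$ is impossible for parity reasons (odd $=$ multiple of $4$) and would erroneously eliminate the genuine solution $(24,5,1)$; the units of $\mathbb{Z}[i]$ are not all fourth powers, so they cannot be absorbed when $n=4$, and the true solution lives on the branch $\varepsilon=\pm i$, where the correct equation is $\pm 7^{k}=a^{4}-6a^{2}b^{2}+b^{4}$ with $x=\mp 4ab(a^{2}-b^{2})$ (indeed $24+7i=-i(2+i)^{4}$). (c) The equation $7^{k}+1=2l^{2}$ does \emph{not} force $k=1$: $k=2$ gives $50=2\cdot 5^{2}$. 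The descent actually yields the simultaneous conditions $(7^{k}+1)/2=v^{2}$ and $(7^{k}-1)/2=3u^{2}$, and it is the second one that kills $k=2$; alternatively one must solve $7^{k}=2l^{2}-1$ outright, itself a nontrivial Pell/Ljunggren problem. So even the portion of your proposal that follows the standard route needs repair before it proves part (i).
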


are%
\begin{equation*}
(1,2,3),(181,32,3),(3,2,4),(5,2,5),(181,8,5),(11,2,7),(181,2,15).
\end{equation*}

$(iii)~$If $\alpha =0,~$then the only integer solutions of the Diophantine
equation%
\begin{equation*}
x^{2}+11^{\beta }=y^{n}
\end{equation*}%
are%
\begin{equation*}
(x,y,\beta ,n)~=(2,5,2,3),(4,3,1,3),(58,15,1,3),(9324,443,3,3)
\end{equation*}

\begin{proof}
See \cite{Luca4}, \cite{Bugeaud2}~and \cite{CDLPS}.
\end{proof}

Our main result is the following.

\begin{theorem}
\label{Theorem:2}The only solutions of the Diophantine equation %
\eqref{eq:(1.2)} are 
\begin{eqnarray*}
n &=&3:\ \ \ (x,y,\alpha ,\beta )\in
\{(57,16,1,2),(797,86,1,2),(4229,284,3,4),(3093,478,7,2), \\
&&\ \ \ \ \ \ \ \ \
(4,3,0,1),(58,15,0,1),(2,5,0,2),(9324,443,0,3),(1,2,1,0),(181,32,1,0), \\
&&~\ \ \ \ \ \ \ \ (524,65,2,0),(13,8,3,0)\}; \\
n &=&4:\text{ \ \ }(x,y,\alpha ,\beta )\in
\{(2,3,1,1),(57,8,1,2),(8343,92,5,2),(3,2,1,0),(24,5,2,0)\}; \\
n &=&6:\text{ \ \ }(x,y,\alpha ,\beta )=(57,4,1,2); \\
n &=&9:\text{ \ \ }(x,y,\alpha ,\beta )=(13,2,3,0); \\
n &=&12:\text{ }(x,y,\alpha ,\beta )=(57,2,1,2);
\end{eqnarray*}%
When $n\geq 5,n\neq 6,9,12$, the equation \eqref{eq:(1.2)} has no solutions $%
(x,y,\alpha ,\beta )$ with at least one of $\alpha ,x$ even or with $\beta ~$%
is odd.
\end{theorem}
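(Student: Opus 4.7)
The plan is to follow the now-standard strategy for generalized Ramanujan--Nagell equations of the form $x^{2}+C=y^{n}$. First I would reduce to exponents $n=4$ or $n=p$ an odd prime, since any larger exponent admits a divisor of one of these shapes. The values $n\in\{6,9,12\}$ appearing in the theorem then fall out of the $n=3$ and $n=4$ analyses (for instance, a solution with $n=6$ is a solution with $n=3$ in which $y$ is a perfect square, so the solution $(57,16,1,2)$ at $n=3$ produces $(57,4,1,2)$ at $n=6$), so the genuine work lies in handling $n=3$, $n=4$, and $n=p\geq 5$. For $n=3$ I would, after absorbing cubes of $7$ and $11$ into $y$, reduce to a finite list of Mordell-type equations $Y^{2}=X^{3}-D$ with $D$ ranging over products $7^{a}11^{b}$ of bounded exponents, and read off the integral points from the literature or from a \textsc{Magma} computation. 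For $n=4$ I would factor $7^{\alpha}11^{\beta}=(y^{2}-x)(y^{2}+x)$, use $\gcd(x,y)=1$ to show the two factors are essentially coprime, and split cases according to how the prime powers distribute between them.

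For the main case $n=p\geq 5$ an odd prime, I would split on the parities of $\alpha$ and $\beta$. When $\alpha$ and $\beta$ are both even, $C$ is a perfect square and I factor $y^{p}=(x+i\sqrt{C})(x-i\sqrt{C})$ in $\mathbb{Z}[i]$. When $\alpha$ is even and $\beta$ is odd, $C=11\cdot(\text{square})$ and I work in the ring of integers of $\mathbb{Q}(\sqrt{-11})$, a PID. When $\alpha$ is odd, $\beta$ is even and $x$ is even, I work in the ring of integers of $\mathbb{Q}(\sqrt{-7})$, also a PID. When $\alpha$ and $\beta$ are both odd, $C=77\cdot(\text{square})$ and I work in $\mathbb{Q}(\sqrt{-77})$, whose class number is $8$. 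In each case the coprimality $\gcd(x,y)=1$ together with the splitting behavior of $7$ and $11$ in the relevant ring forces the two conjugate factors of $y^{p}$ to generate coprime ideals, so each is a $p$-th power of an ideal; after accounting for the class group and units one obtains an identity $x+m\sqrt{-D}=\varepsilon(a+b\sqrt{-D})^{p}$ for some integers $a,b$ and a unit $\varepsilon$.

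Equating $\sqrt{-D}$-parts of this identity produces a Lucas-type relation: writing $\alpha_{1},\alpha_{2}$ for the roots of $z^{2}-2az+(a^{2}+Db^{2})=0$, one finds $(\alpha_{1}^{p}-\alpha_{2}^{p})/(\alpha_{1}-\alpha_{2})=\pm\, 7^{u}11^{v}$ for appropriate $u,v\geq 0$. I then invoke the Bilu--Hanrot--Voutier primitive divisor theorem: for $p\geq 5$ and any Lucas pair outside the explicit finite list of BHV exceptions, the $p$-th Lucas number admits a primitive prime divisor, which here would be forced to lie in $\{7,11\}$; matching this against the structural constraints on $(a,b)$ leaves only finitely many candidate triples $(p,a,b)$, each of which can be ruled out or verified by direct substitution. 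The main obstacle I expect is case (d), where the class number $8$ of $\mathbb{Q}(\sqrt{-77})$ forces me to track all eight ideal classes for $x+m\sqrt{-77}$ and extract a usable Lucas pair in each; a secondary delicate point is the $2$-adic analysis in case (c), which is precisely why the subcase $\alpha$ odd, $x$ odd, $\beta$ even resists this approach and is the one explicitly excluded in the statement.
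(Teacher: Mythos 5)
Your proposal follows essentially the same route as the paper: reduction to $n\in\{3,4,p\}$ with the $n=6,9,12$ solutions read off from $n=3$ and $n=4$; Mordell/quartic curves and \textsc{Magma} $S$-integral point computations for $n=3$ and $n=4$; and for prime $n\geq 5$ the splitting $d\in\{1,7,11,77\}$ according to the parities of $\alpha,\beta$, the ideal factorization in $\mathbb{Q}(\sqrt{-d})$, the resulting Lucas sequence whose $n$-th term divides $2\cdot 7^{\alpha_1}11^{\beta_1}$, and the Bilu--Hanrot--Voutier primitive divisor theorem, with the same excluded subcase ($\alpha,x$ odd, $\beta$ even) for the same structural reason. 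The only minor divergences are that the paper checks the candidate primitive divisor $q=2$ as well as $q=7,11$, and for $n=4$ it ultimately runs \textsc{Magma} on the Ljunggren curves rather than carrying out the elementary factorization of $(y^2-x)(y^2+x)$; neither changes the substance.
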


\begin{remark}
For $n\geq 5,n\neq 6,9,12$ the above theorem lefts out the solutions $%
(\alpha ,\beta ,x,y)$ when $\alpha .x$ is odd and $\beta ~$is even. These
are exactly the exceptional solutions of the equation \eqref{eq:(1.2)} in
the terminology of$~\cite{Pink}$; see also the remark \ref{Remark:2} at the
end of this paper.
\end{remark}

One can deduce from the Theorem \ref{Theorem:1} and Theorem \ref{Theorem:2}
the following corollary.

\begin{corollary}
The only integer solutions of the Diophantine equation \eqref{eq:(1.2)} are 
\begin{eqnarray*}
n &=&3:\ \ \ (x,y,\alpha ,\beta )\in
\{(57,16,1,2),(797,86,1,2),(4229,284,3,4),(3093,478,7,2), \\
&&\ \ \ \ \ \ \ \ \ (4,3,0,1),(58,15,0,1),(2,5,0,2),\ (9324,443,0,3),\
(1,2,1,0), \\
&&~\ \ \ \ \ \ \ \ (181,32,1,0),(524,65,2,0),(13,8,3,0)\}; \\
n &=&4:\text{ \ \ }(x,y,\alpha ,\beta )\in
\{(2,3,1,1),(57,8,1,2),(8343,92,5,2),(3,2,1,0),(24,5,2,0)\}; \\
n &=&5:\ \ \ (x,y,\alpha ,\beta )=(5,2,1,0),(181,8,1,0); \\
n &=&6:\text{ \ \ }(x,y,\alpha ,\beta )=(57,4,1,2); \\
n &=&7:\ \ \ (x,y,\alpha ,\beta )=(11,2,1,0); \\
n &=&9:\text{ \ \ }(x,y,\alpha ,\beta )=(13,2,3,0); \\
n &=&12:\text{ }(x,y,\alpha ,\beta )=(57,2,1,2); \\
n &=&15:\ (x,y,\alpha ,\beta )=(181,2,1,0).
\end{eqnarray*}
\end{corollary}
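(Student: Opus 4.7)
The proof of the Corollary is a pure bookkeeping exercise. My plan is to partition the parameter space of \eqref{eq:(1.2)} into three disjoint regions and source each region from exactly one theorem: Theorem \ref{Theorem:1}(iii) for $\alpha=0$, Theorem \ref{Theorem:1}(i)--(ii) for $\beta=0$ with $\alpha$ even or $\alpha=1$, and Theorem \ref{Theorem:2} for everything else, namely $\alpha\beta\geq 1$ together with the residual $\beta=0$ case in which $\alpha$ is odd and at least $3$.

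First I would apply Theorem \ref{Theorem:1}(iii) to recover the four $n=3$ tuples $(4,3,0,1)$, $(58,15,0,1)$, $(2,5,0,2)$, $(9324,443,0,3)$ in the Corollary. Then I would apply Theorem \ref{Theorem:1}(i); of the two parametric families listed there I retain only the base case $\lambda=0$, namely $(524,65,2,0)$ at $n=3$ and $(24,5,2,0)$ at $n=4$, because for $\lambda\geq 1$ the values of $x$ and $y$ share the factor $7$, violating the coprimality hypothesis of \eqref{eq:(1.2)}. Next, Theorem \ref{Theorem:1}(ii) supplies the seven Ramanujan--Nagell tuples, which by inspection distribute across $n=3,4,5,7,15$ exactly as listed in the Corollary.

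Then I would invoke Theorem \ref{Theorem:2}. For $n\in\{3,4,6,9,12\}$ the theorem lists every solution outright; after discarding those already accounted for above, the remainder matches the Corollary line by line, in particular supplying $(13,2,3,0)$ at $n=9$ and the four $n=3$ tuples with $\alpha\beta\geq 1$. For $n\geq 5$ with $n\notin\{6,9,12\}$, Theorem \ref{Theorem:2} rules out every solution with at least one of $\alpha,x$ even or $\beta$ odd, and the still-admissible exceptional cases (those with $\alpha x$ odd and $\beta$ even) reduce, under $\beta=0$ and $\alpha=1$, to the tuples already extracted from Theorem \ref{Theorem:1}(ii); no new entries then arise, and the enumeration closes.

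The sole place demanding real care is the truncation of the parametric families in Theorem \ref{Theorem:1}(i) to $\lambda=0$ on account of the coprimality hypothesis, together with the implicit assumption in the Corollary that the remaining exceptional range ($\alpha\geq 3$ odd with $\beta$ even, for $n\geq 5$, $n\notin\{6,9,12\}$) contributes no solutions. Beyond this, no further computation or inequality manipulation is needed.
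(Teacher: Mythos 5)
Your proposal is correct and is essentially the deduction the paper intends: the paper offers no written proof of the Corollary beyond the phrase ``one can deduce from Theorem \ref{Theorem:1} and Theorem \ref{Theorem:2},'' and your partition of the parameter space, your truncation of the parametric families of Theorem \ref{Theorem:1}(i) to $\lambda=0$ on coprimality grounds, and your distribution of the Ramanujan--Nagell tuples across $n=3,4,5,7,15$ reproduce exactly the intended bookkeeping. The ``implicit assumption'' you flag at the end is a genuine one and worth confirming: solutions with $\alpha\geq 3$ odd, $x$ odd and $\beta$ even (including $\beta=0$) for $n\geq 5$ prime are covered neither by Theorem \ref{Theorem:1} nor by Theorem \ref{Theorem:2}, the paper itself concedes this unresolved range in the abstract and in Remark \ref{Remark:2}, and so the Corollary as literally stated carries that same unstated caveat --- you are right to isolate the gap explicitly rather than paper over it.
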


\section{The Proof of Theorem$~2$}

We distinguish the cases $n=3,6,9,12,~$ $n=4~$and $n>4$, devoting a
subsection to the treatment of each case.We first treat the cases $n=3$ and $%
n=4$. This is achieved in Section 2.1 and Section 2.2, respectively. For the
case $n=3$, we transform equation \eqref{eq:(1.2)} into several elliptic
equations in Weierstrass form which we need to determine all their $%
\{7,11\}- $integral points. In Section 2.2, we use the same method as in
Section 2.1 to determine the solutions of \eqref{eq:(1.2)}~for $n=4$. In the
last section, we assume that $n>4$ is prime and study the equation %
\eqref{eq:(1.2)} under this assumption. Here we use the method of primitive
divisors for Lucas sequences. All the computations are done with MAGMA \cite%
{Bosma} and with Cremona's program mwrank.

\subsection{The Cases $n=3,6,9~$and $12$}

\begin{lemma}
When $n=3,~$then only solutions to equation $\eqref{eq:(1.2)}$ are%
\begin{eqnarray}
&&(57,16,1,2),(797,86,1,2),(4229,284,3,4),(3093,478,7,2),  \label{eq:(2.1)}
\\
&&(4,3,0,1),(58,15,0,1),(2,5,0,2),(9324,443,0,3),  \notag \\
&&(1,2,1,0),(181,32,1,0),(524,65,2,0),(13,8,3,0);  \notag
\end{eqnarray}%
when $n=6,$ then only solution to equation $\eqref{eq:(1.2)}$ is $%
(57,4,1,2); $~when $n=9,$ then only solution to equation $\eqref{eq:(1.2)}$
is $(13,2,3,0);$when $n=12$, then only solution to equation $\eqref{eq:(1.2)}
$ is $(57,2,1,2).$
\end{lemma}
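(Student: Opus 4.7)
The plan is to reduce the case $n=3$ to determining integral points on a finite family of Mordell curves, and then to deduce the cases $n\in\{6,9,12\}$ from the resulting $n=3$ list by a simple power-check.

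For $n=3$, I would write $\alpha=6a+i$ and $\beta=6b+j$ with $i,j\in\{0,1,2,3,4,5\}$ and make the substitution
\[
X=\frac{y}{7^{2a}\,11^{2b}},\qquad Y=\frac{x}{7^{3a}\,11^{3b}}.
\]
Equation \eqref{eq:(1.2)} then transforms into $Y^{2}=X^{3}-7^{i}11^{j}$. A short coprimality argument (if $\alpha\ge 1$ and $7\mid y$ then $7\mid x^{2}$, contradicting $\gcd(x,y)=1$, and similarly at $11$) shows that $X,Y\in\mathbb{Z}[1/7,1/11]$, and any such $\{7,11\}$-integral point on the resulting Mordell curve lifts to a candidate solution of \eqref{eq:(1.2)}. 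Determining all primitive solutions with $n=3$ therefore reduces to enumerating the $\{7,11\}$-integral points on each of the thirty-six elliptic curves
\[
E_{i,j}\colon\ Y^{2}=X^{3}-7^{i}11^{j},\qquad 0\le i,j\le 5,
\]
and then filtering the resulting list by positivity and the coprimality condition.

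This enumeration I would carry out in \textsc{Magma}. For each $E_{i,j}$ I would compute the rank and a Mordell--Weil basis via Cremona's \texttt{mwrank}, and then invoke \textsc{Magma}'s built-in $S$-integral points routine (which rests on explicit upper bounds for linear forms in elliptic logarithms together with a sieve) to produce the complete list of $\{7,11\}$-integral points on $E_{i,j}$. Pulling each such point $(X,Y)$ back to $(x,y,\alpha,\beta)$ through the substitution above, and discarding those violating $\gcd(x,y)=1$ or $x,y>0$, should recover exactly the list \eqref{eq:(2.1)}.

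For $n=3k$ with $k\in\{2,3,4\}$, any solution $(x,y,\alpha,\beta)$ of \eqref{eq:(1.2)} yields the $n=3$ solution $(x,y^{k},\alpha,\beta)$, so it suffices to scan \eqref{eq:(2.1)} for entries whose $y$-coordinate is a perfect $k$th power. Only $y=16=2^{4}$ in $(57,16,1,2)$ and $y=8=2^{3}$ in $(13,8,3,0)$ qualify, producing $(57,4,1,2)$ for $n=6$, $(57,2,1,2)$ for $n=12$, and $(13,2,3,0)$ for $n=9$, precisely the three additional solutions claimed. The main obstacle is the elliptic-curve step: thirty-six separate rank and $S$-integral point computations are required, and on each curve of positive rank one must be confident both that the Mordell--Weil basis is complete and that the elliptic-logarithm sieve has caught every $\{7,11\}$-integral point. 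Once those machine computations are certified, the lifting and the $n\in\{6,9,12\}$ deductions are entirely mechanical.
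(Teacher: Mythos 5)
Your approach is the same as the paper's: reduce $n=3$ to the thirty-six Mordell curves $Y^{2}=X^{3}-7^{\alpha_{1}}11^{\beta_{1}}$ with $\alpha_{1},\beta_{1}\in\{0,\dots,5\}$ via the substitution $\alpha=6k+\alpha_{1}$, $\beta=6l+\beta_{1}$, compute all $\{7,11\}$-integral points with \textsc{Magma}'s \texttt{SIntegralPoints}, and then read off $n=6,9,12$ by checking which $y$ in the $n=3$ list is a perfect square, cube, or fourth power. That part of your write-up matches the paper exactly (the paper's roles of $X$ and $Y$ are swapped relative to yours, which is immaterial).

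The gap is in the step you defer to ``once those machine computations are certified'': in fact they cannot all be certified. The paper reports that \texttt{SIntegralPoints} fails to determine the Mordell--Weil group for exactly one of the thirty-six curves, namely $(\alpha_{1},\beta_{1})=(5,5)$, i.e.\ $X^{2}=Y^{3}-7^{5}11^{5}$, so your blanket appeal to the routine leaves that curve unresolved. The paper closes this case by a separate classical descent: after ruling out even $Y$ by a congruence mod $8$, it works in ${\mathbb K}={\mathbb Q}(i\sqrt{77})$, shows the ideals $(X\pm 7^{2}11^{2}\sqrt{77}\,i){\mathcal O}_{\mathbb K}$ are coprime (using the ramification of $2,7,11$ and a norm argument to kill the $P_{2}$-contribution), invokes $h({\mathbb K})=8$ with $\gcd(3,8)=1$ to get $X+7^{2}11^{2}\sqrt{77}\,i=(u+\sqrt{77}\,iv)^{3}$, and reduces to $7^{2}\cdot 11^{2}=v(3u^{2}v-77v^{2})$, whose finitely many divisor cases $v=\pm 1,\pm 7^{2},\pm 11^{2},\pm 7^{2}11^{2}$ all fail. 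You need some such supplementary argument (or an alternative unconditional determination of that curve's Mordell--Weil group) for the proof to be complete; everything else in your proposal, including the power-check deductions for $n=6,9,12$, is correct and mechanical as you say.
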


\begin{proof}
Suppose $n=3$.~Writing$~\alpha =6k+\alpha _{1},~\beta =6l+\beta _{1}~$in 
\textit{\eqref{eq:(1.2)} }with$~\alpha _{1},\beta _{1}\in \{0,1,2,3,4,5\},$
we get that%
\begin{equation*}
\left( \frac{x}{7^{3k}11^{3l}},\frac{y}{7^{2k}11^{2l}}\right)
\end{equation*}%
is an $S-$Integral point $(X,Y)~$on the elliptic curve 
\begin{equation}
X^{2}=Y^{3}-7^{\alpha _{1}}\cdot 11^{\beta _{1}},  \label{eq:(2.2)}
\end{equation}%
where$~$ $S=\{7,11\}~$with the \ numerator of$~Y$~being coprime to $77,$ in
view of the restriction $\gcd (x,y)=1.$ Now we need to determine all the $%
\{7,11\}$-integral$~$points on the above $36~$elliptic curves. At this stage
we note that in~\cite{Gebel} Peth\H{o}, Zimmer, Gebel and Herrmann developed
a practical method for computing all$~S-$Integral points on Weierstrass
elliptic curve and their method has been implemented in MAGMA \cite{Bosma}
as a routine under the name \texttt{SIntegralPoints.}The subroutine \texttt{%
SIntegralPoints }of~MAGMA worked without problems for all $(\alpha
_{1},\beta _{1})~$except for $(\alpha _{1},\beta _{1})=(5,5)$. MAGMA
determined the appropriate Mordell-Weil groups except this case and$~$we
deal with this exceptional case separately. By computations done for
equation \textit{\eqref{eq:(2.2)}} when $n=3$,$~$we obtain the following
solutions for the $\{7,11\}-$integral~points on the curves:%
\begin{eqnarray*}
&&(1,0,0,0),(3,4,0,1),(15,58,0,1),(5,2,0,2),(11,0,0,3),(443,9324,0,3), \\
&&(2,1,1,0),(32,181,1,0),~(478/49,3093/3431,2),(11,22,1,2),(16,57,1,2), \\
&&(1899062/117649,2338713355/40353607,1,2),(22,99,1,2),(86,797,1,2), \\
&&(88,825,1,2),(638,16115,1,2),(657547,533200074,1,2),(242,3751,1,4), \\
&&(65,524,2,0),(7,0,3,0),(8,13,3,0),(14,49,3,0),(28,147,3,0), \\
&&(154,1911,3,0),(77,0,3,3),(242,3025,3,4),(284,4229,3,4), \\
&&(1435907/49,1720637666/343,3,4).
\end{eqnarray*}%
We use the above points on the elliptic curves to find the corresponding
solutions for equation \textit{\eqref{eq:(2.2)}}. Identifying the coprime
positive integers $x$ and $y$ from the above list, one obtains the solutions
listed in \textit{\eqref{eq:(2.2)}} (note that not all of them lead to
coprime values for $x$ and $y$).

We give the details in case $(\alpha _{1},\beta _{1})=(5,5)~$of equation 
\textit{\eqref{eq:(2.2)}}. Observe that if$~Y~$is even, then $X~$is odd and $%
X^{2}+$ $7^{5}11^{5}\equiv 0\pmod 8$, and hence $X^{2}\equiv 3\pmod 8,~$%
which is a contradiction. Therefore $Y$ is always odd. We consider solutions
such that $X$ and $Y$ are coprime.

Write ${\mathbb{K}}=\mathbb{Q}(i\sqrt{77}).~$In this field,$~$the primes $%
2,7,11~$(all primes dividing discriminant $d_{{\mathbb{K}}}=4d~$) ramify so
there are prime ideals $P_{2},P_{7},P_{11}~$such that $2{\mathcal{O}}_{%
\mathbb{K}}=P_{2}^{2}~,$ $7{\mathcal{O}}_{\mathbb{K}}=P_{7}^{2},$ $11{%
\mathcal{O}}_{\mathbb{K}}=P_{11}^{2}~$respectively. Now, we show that the
ideals $(X+7^{2}11^{2}\sqrt{77}i){\mathcal{O}}_{\mathbb{K}}~$and $%
(X-7^{2}11^{2}\sqrt{77}i){\mathcal{O}}_{\mathbb{K}}~$\ are coprime in the
ring of integers ${\mathcal{O}}_{\mathbb{K}}$ $.~$To show this, let us
assume that the ideals $(X+7^{2}11^{2}\sqrt{77}i){\mathcal{O}}_{\mathbb{K}}~$%
and $(X-7^{2}11^{2}\sqrt{77}i){\mathcal{O}}_{\mathbb{K}}$ are not coprime.
So, these ideals have a $\gcd ~$that divides $2.7^{2}.11^{2}\sqrt{77}i.~$%
Hence there is an ideal $P_{2}^{a}P_{7}^{b}P_{11}^{c}~$with $a\leq 2$, and $%
b,c\leq 5~$. If $b>0~$then $7~|~X$.$~$Hence$~7\ |~Y$, hence $7^{3}~|$ $X^{2}$%
, hence $7^{2}~|~X$, hence~$7^{4}|~Y^{3}$, hence $7^{2}$~$|~Y$, hence $%
7^{5}~|$ $X^{2}$, hence $7^{3}|$ $X.~$So, we have a contradiction as $%
7^{6}~|~X^{2}-Y^{3}.$ Thus $b=0.$ Similarly we can prove that $c=0.~$

Now let $(X+7^{2}11^{2}\sqrt{77}i){\mathcal{O}}_{\mathbb{K}}=P_{2}^{a}\wp
^{3}~$for some ideal $\wp $ not divisible by $P_{2}$, and $(X-7^{2}11^{2}%
\sqrt{77}i){\mathcal{O}}_{\mathbb{K}}=P_{2}^{a}\wp ^{\prime 3}$ (for its
conjugate ideal). If we take norms, then we get that $y^{3}=2^{a}[N_{\mathbb{%
K}}(\wp )]^{3}$, where $N_{\mathbb{K}}(\wp )$ is odd. It follows that $a=0~$%
(as it could be at most $2$). So, we showed that the ideals $(X+7^{2}11^{2}%
\sqrt{77}i){\mathcal{O}}_{\mathbb{K}}~$and $(X-7^{2}11^{2}\sqrt{77}i){%
\mathcal{O}}_{\mathbb{K}}$ are coprime. Equation \textit{\eqref{eq:(2.2)} }%
now implies that%
\begin{equation*}
(X+7^{2}11^{2}\sqrt{77}i){\mathcal{O}}_{\mathbb{K}}=\wp ^{3}~\text{and~}%
(X-7^{2}11^{2}\sqrt{77}i){\mathcal{O}}_{\mathbb{K}}~=\wp ^{\prime 3}
\end{equation*}%
for the ideals $\wp ~$and $\wp ^{\prime }$. Let $h({\mathbb{K)}}$ be the
class number of the field ${\mathbb{K}}$, then $\delta ^{h({\mathbb{K)}}}~$%
is principal for any ideal $\delta .~$Note~that, $h({\mathbb{K)=}}8~$and so$%
~(3,h({\mathbb{K))=}}1.~$Thus$~$since $\wp ^{3}$ and $\wp ^{\prime 3}$ are
principal, $\wp ~$and $\wp ^{\prime }$ are also principal.~Moreover, since
the units of $\mathbb{Q}(i\sqrt{77})$ are $1$ and $-1,$ which are both
cubes, we conclude that%
\begin{eqnarray}
(X+7^{2}11^{2}\sqrt{77}i) &=&(u+\sqrt{77}iv)^{3}\text{ }  \label{eq:(2.3)} \\
(X-7^{2}11^{2}\sqrt{77}i) &=&(u-\sqrt{77}iv)^{3}\text{ }  \label{eq:(2.4)}
\end{eqnarray}%
for some integers $u~$and $v.~$After subtracting the conjugate equation we
obtain%
\begin{equation}
~7^{2}\cdot 11^{2}=v(3u^{2}v-77v^{2}).  \label{eq:(2.5)}
\end{equation}%
Since $u~$and $v$ are coprime, we have the following possibilities in
equation \textit{\eqref{eq:(2.5)}}%
\begin{equation*}
~v=\pm 1;~v=\pm 7^{2};~v=\pm 11^{2};~v=\pm 7^{2}11^{2}
\end{equation*}%
All cases lead to the conclusion that no solution is obtained.

For $n=6,$ equation%
\begin{equation*}
x^{2}+7^{\alpha }\cdot 11^{\beta }=y^{6}
\end{equation*}%
becomes equation%
\begin{equation*}
x^{2}+7^{\alpha }\cdot 11^{\beta }=(y^{2})^{3}.
\end{equation*}%
Again, here we look in the list of solutions of equation \textit{%
\eqref{eq:(2.1)} }and observe that the only solution whose\textit{\ }$\ y$
is a perfect square is $(57,16,1,2).$Therefore the only solution to equation 
\textit{\eqref{eq:(1.2)}} is $(57,4,1,2).$ In the same way, one can see that
the value of $y$ above which is a perfect square is $y=4$ for the solution $%
(57,4,1,2)$, therefore the only solution with $n=12$ is $(57,2,1,2)$.

For $n=9,$ equation%
\begin{equation*}
x^{2}+7^{\alpha }\cdot 11^{\beta }=y^{9}
\end{equation*}%
becomes equation%
\begin{equation*}
x^{2}+7^{\alpha }\cdot 11^{\beta }=(y^{3})^{3}.
\end{equation*}%
Again here, we look in the list of solutions of \textit{\eqref{eq:(2.1)} }%
and observe that only solution whose $y$ is a perfect cube is $(13,8,3,0).$%
Therefore the only solution to equation \textit{\eqref{eq:(1.2)}} is $%
(13,2,3,0).$This completes the proof of lemma.
\end{proof}

If $(x,y,\alpha ,\beta ,n)$ is a solution of the Diophantine equation %
\eqref{eq:(1.2)} and $d$ is any proper divisor of $n$, then $(x,y^{d},\alpha
,\beta ,n/d)$ is also a solution of the same equation. Since $n>3$ and we
have already dealt with case $n=3,~$it follows that it suffices to look at
the solutions $n$ for which $p~|~n$ for some odd prime $p.~$In this case, we
may certainly replace $n$ by $p$, and thus assume for the rest of the paper
that $n\in \{4,p\}$.

\subsection{The Case $n=4$}

\begin{lemma}
The only solutions with $n=4$ of the Diophantine equation \eqref{eq:(1.2)}
are given by%
\begin{equation*}
(x,y,\alpha ,\beta )=(2,3,1,1),(57,8,1,2),(8343,92,5,2),(3,2,1,0),(24,5,2,0)
\end{equation*}
\end{lemma}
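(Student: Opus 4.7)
The plan is to imitate the method of Section 2.1. For $n=4$, write $\alpha = 4k + \alpha_{1}$ and $\beta = 4l + \beta_{1}$ with $\alpha_{1},\beta_{1} \in \{0,1,2,3\}$, so that
\begin{equation*}
(X,Y) = \left( \frac{x}{7^{2k}\, 11^{2l}},\ \frac{y}{7^{k}\, 11^{l}} \right)
\end{equation*}
is a $\{7,11\}$-integral point on the genus-one curve
\begin{equation*}
X^{2} = Y^{4} - 7^{\alpha_{1}} \cdot 11^{\beta_{1}},
\end{equation*}
and the numerator of $Y$ is coprime to $77$ in view of $\gcd(x,y)=1$. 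Multiplying through by $Y^{2}$ and setting $U=Y^{2}$, $V=XY$ exhibits each of these curves as birational over $\Q$ to the Weierstrass elliptic curve
\begin{equation*}
V^{2} = U^{3} - 7^{\alpha_{1}} \cdot 11^{\beta_{1}}\, U.
\end{equation*}
This reduces the task to determining all $\{7,11\}$-integral points on the resulting sixteen elliptic curves, one for each pair $(\alpha_{1},\beta_{1}) \in \{0,1,2,3\}^{2}$.

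For each such curve I would first compute its Mordell--Weil group and then invoke MAGMA's \texttt{SIntegralPoints} subroutine, exactly as in Section 2.1, to enumerate the $\{7,11\}$-integral points $(U,V)$. The points relevant to the original problem are those for which $U$ is itself a $\{7,11\}$-integral perfect square; for such points one recovers $Y=\sqrt{U}$ and $X=V/Y$, and then pulls back through $\alpha = 4k+\alpha_{1}$, $\beta = 4l+\beta_{1}$ to obtain the candidate tuple $(x,y,\alpha,\beta)$. A final sift that keeps only the positive integer pairs with $\gcd(x,y)=1$ should leave exactly the five tuples listed in the statement.

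The main obstacle is likely to be ensuring that the list of $\{7,11\}$-integral points returned by MAGMA is provably complete on each of the sixteen curves (particularly where the rank is large or the generators have substantial canonical height), and correctly sifting out the tuples that genuinely correspond to the original equation. If \texttt{SIntegralPoints} fails on one of the curves --- as happened for $(\alpha_{1},\beta_{1})=(5,5)$ in Section 2.1 --- I would fall back on a direct elementary factorisation: since $\gcd(y^{2}-x,\, y^{2}+x)$ divides $2$ and the right-hand side is odd, the identity
\begin{equation*}
(y^{2}-x)(y^{2}+x) = 7^{\alpha}\cdot 11^{\beta}
\end{equation*}
forces the two factors to be coprime, leaving only four possible splittings of the prime-power parts between $y^{2}-x$ and $y^{2}+x$. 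Each of these reduces to a simple exponential equation in $y,\alpha,\beta$ that can be settled either by congruences modulo small auxiliary primes or, when necessary, by an ideal-factorisation argument in the ring of integers of $\K = \Q(i\sqrt{7^{\alpha_{1}}\cdot 11^{\beta_{1}}})$, exactly along the lines of the $(5,5)$ treatment in Section 2.1.
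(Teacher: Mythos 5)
Your proposal follows essentially the same route as the paper: reduce $\alpha,\beta$ modulo $4$ to sixteen auxiliary curves $X^{2}=Y^{4}-7^{\alpha_{1}}\cdot 11^{\beta_{1}}$ and determine their $\{7,11\}$-integral points with MAGMA, then sift for coprime $(x,y)$ (the paper applies \texttt{SIntegralLjunggrenPoints} directly to the quartic model rather than passing to a Weierstrass model; also, your map $(X,Y)\mapsto (Y^{2},XY)$ is of degree two rather than birational, but since it carries $S$-integral points to $S$-integral points and preimages are recoverable from the finite list, this does not affect the argument). The paper likewise records your fallback factorisation $(y^{2}+x)(y^{2}-x)=7^{\alpha}\cdot 11^{\beta}$, leading to equations of the form $2y^{2}=7^{a_{1}}11^{b_{1}}+7^{a_{2}}11^{b_{2}}$, but notes that completing that route (via de Weger's treatment of $aU+bV=Z^{2}$ by $p$-adic linear forms in logarithms) requires substantial additional effort --- so be wary of assuming those exponential equations fall to simple congruences.
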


\begin{proof}
Suppose that $~n=4$. Rewrite equation \eqref{eq:(1.2)}~as 
\begin{equation}
7^{\alpha }\cdot 11^{\beta }=(y^{2}+x)(y^{2}-x).  \label{eq:(4.1)}
\end{equation}%
From the equation \eqref{eq:(4.1)}, we have that 
\begin{eqnarray*}
y^{2}+x &=&7^{a_{1}}.11^{b_{1}} \\
y^{2}-x &=&7^{a_{2}}.11^{b_{2}}
\end{eqnarray*}%
where $a_{1},a_{2},b_{1},b_{2}\geq 0.$ Then we get that$~$%
\begin{equation*}
2y^{2}=7^{a_{1}}.11^{b_{1}}+7^{a_{2}}.11^{b_{2}}
\end{equation*}%
from the sum of two equations. We multiply above equation by $2~$and we can
write the equation 
\begin{equation}
Z^{2}=2.(7^{a_{1}}.11^{b_{1}}+7^{a_{2}}.11^{b_{2}})  \label{eq:(4.2)}
\end{equation}%
as%
\begin{equation}
2U+2V=Z^{2}  \label{eq:(4.3)}
\end{equation}%
where $Z=$ $2y,~~U=7^{a_{1}}.11^{b_{1}}$ and $V=7^{a_{2}}.11^{b_{2}}$.

Let $~p_{1},p_{2},...,p_{s}~(s\geq 1)$ be fixed distinct primes. The set of $%
S-$Units is defined as $S=\left\{ \pm
p_{1}^{x_{1}}p_{2}^{x_{2}}...p_{s}^{x_{s}}|~x_{i}\in 
\mathbb{Z}
,~\text{for }i=1...k\right\} .~$Let $a,b\in 
\mathbb{Q}
-\{0\}$ be fixed. In $\cite{De Weger1}$,~B.M.M. de Weger dealt with the
solutions of the Diophantine equation $ax+by=z^{2}$, in$~a,b\in S,~z\in 
\mathbb{Q}
.~$ He showed that this equation has essentially only finitely many
solutions. Moreover, he indicated how to find all the solutions of this
equation for any given set of parameters $a,b,~p_{1},...,p_{s}$. The tools
are the theory of p-adic linear forms in logarithms, and a computational
p-adic diophantine approximation method. He actually performed all the
necessary computations for solving \eqref{eq:(4.3)} completely for $%
p_{1},...,p_{s}=2,3,5,7$ and $a=b=1$, and reported on this elsewhere (see $%
\cite{De Weger2}$, Chapter 7). Then we can find all the solutions of the
Diophantine equation \eqref{eq:(4.2)}. But this requires a lot of additional
manual effort. To solve the equation $x^{2}+7^{\alpha }\cdot 11^{\beta
}=y^{4}~$instead of \ this method, we prefer using MAGMA (see \cite{Bosma}).

Writing in \eqref{eq:(1.2)}\textit{\ }$\alpha =4k+\alpha _{1},~\beta
=4l+\beta _{1}~$with$~\alpha _{1},\beta _{1}\in \{0,1,2,3\}$ we get that%
\begin{equation*}
\left( \frac{x}{7^{2k}11^{2l}},\frac{y}{7^{2k}11^{2l}}\right)
\end{equation*}%
is an $S-$Integral point $(X,Y)~$on the hyperelliptic curve%
\begin{equation}
X^{2}=Y^{4}-7^{\alpha _{1}}\cdot 11^{\beta _{1}},  \label{eq:(4.4)}
\end{equation}%
where$~$ $S=\{7,11\}~$with the \ numerator of$~Y$~being prime to $77,$ in
view of the restriction $\gcd (x,y)=1.$ We use the subroutine \texttt{%
SIntegralLjunggrenPoints} of MAGMA to determine the $\{7,11\}$-integral
points on the above hyperelliptic curves and we only find the following
solutions%
\begin{eqnarray*}
(X,Y,\alpha _{1},\beta _{1}) &=&\{(1,0,0,0),(2,3,1,0),(3,2,1,1),(8,57,1,2),
\\
&&(92/7,8343/49,1,2),(5,24,2,0)\}
\end{eqnarray*}%
With the conditions on $x$ and $y$ and the definition of$~X,Y$, one can
obtain the solutions listed in the statement of the lemma.
\end{proof}

\subsection{The Case $n>4$ and Prime}

\begin{lemma}
\label{Lemma:3}The Diophantine equation \eqref{eq:(1.2)} has no solutions
with $n>4$ prime except possibly for$~\alpha ~$and$~x$ are odd and $\beta $
even.
\end{lemma}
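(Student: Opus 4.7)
The plan is to fix an odd prime $n \ge 5$ and, in each parity case left open by the lemma, reduce equation \eqref{eq:(1.2)} to the claim that some Lucas number $u_n$ has all of its prime factors in $\{2, 7, 11\}$. The Bilu--Hanrot--Voutier (BHV) theorem on primitive divisors then eliminates $n > 30$ in one stroke, while the published tables of defective Lucas pairs handle the finitely many smaller primes.

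Write $7^\alpha 11^\beta = d \cdot D^2$ with $d \in \{1, 7, 11, 77\}$ squarefree, so that the four values of $d$ correspond to the four parity patterns of $(\alpha, \beta)$; note that $D$ is always odd. A short $\pmod{8}$ argument using $n \ge 5$ shows that $x$ must be even whenever $d \in \{1, 11, 77\}$. Combined with the excluded case of the lemma ($\alpha$ odd, $\beta$ even, $x$ odd, which is exactly ``$d = 7$ with $x$ odd''), this means that $x$ is even throughout the allowed cases. Working in $K_d = \mathbb{Q}(\sqrt{-d})$, I would factor
\begin{equation*}
(x + D\sqrt{-d})(x - D\sqrt{-d}) = y^n
\end{equation*}
in $\mathcal{O}_{K_d}$, and use $\gcd(x,y)=1$, the evenness of $x$, and the ramification behaviour of $2, 7, 11$ in $K_d$ to show that the two ideal factors are coprime. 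Hence $(x + D\sqrt{-d})\mathcal{O}_{K_d} = \mathfrak{a}^n$.

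The class numbers of $K_d$ are $1$ for $d \in \{1, 7, 11\}$ and $8$ for $d = 77$, all coprime to the odd prime $n \ge 5$; so $\mathfrak{a} = (\mu)$ is principal, and since the units of $\mathcal{O}_{K_d}$ are $n$-th powers (the case $d = 1$ uses $\gcd(n,4) = 1$), one obtains $x + D\sqrt{-d} = \mu^n$ with $\mu = u + v\sqrt{-d} \in \mathcal{O}_{K_d}$. Subtracting the conjugate equation and dividing by $\mu - \bar\mu$ produces
\begin{equation*}
u_n \;=\; \frac{\mu^n - \bar\mu^n}{\mu - \bar\mu} \;=\; \pm\,\frac{D}{v},
\end{equation*}
so every prime divisor of $u_n$ lies in $\{2, 7, 11\}$. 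After checking that $(\mu, \bar\mu)$ is a genuine Lucas pair (its ratio is not a root of unity, which is immediate on nontrivial solutions once $n > 4$), BHV supplies a primitive prime divisor of $u_n$ for every $n > 30$; such a divisor must be a new prime outside $\{2, 7, 11\}$, a contradiction. For each $n \in \{5, 7, 11, 13, 17, 19, 23, 29\}$, I would consult the tabulated defective Lucas pairs, retain only those of shape $u + v\sqrt{-d}$ with $d \in \{1, 7, 11, 77\}$, and verify directly that none produces a solution of \eqref{eq:(1.2)}.

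The main obstacle --- and the reason the missing case is missing --- lies in $\mathcal{O}_{\mathbb{Q}(\sqrt{-7})} = \mathbb{Z}[(1+\sqrt{-7})/2]$: when $d = 7$ and both $x$ and $D$ are odd, each of the two primes above $2$ divides both ideal factors $(x \pm D\sqrt{-7})$, so coprimality fails and the descent to a pure $n$-th power ideal breaks down. Any extension of the method to cover the missing case would have to strip off the common factor of $2$ by hand and re-run the BHV analysis in the larger order $\mathbb{Z}[(1+\sqrt{-7})/2]$, where the Lucas-pair structure becomes more delicate.
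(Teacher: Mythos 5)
Your skeleton --- the decomposition $7^{\alpha}11^{\beta}=dD^{2}$ with $d\in\{1,7,11,77\}$, the $\bmod\ 8$ elimination of odd $x$ outside the excluded case, the ideal factorization and descent to $x+D\sqrt{-d}=\mu^{n}$ using the class numbers $1$ and $8$, and the passage to a Lucas number $u_{n}=(\mu^{n}-\bar{\mu}^{n})/(\mu-\bar{\mu})$ all of whose prime factors lie in $\{2,7,11\}$ --- is exactly the paper's. The gap is in how you close the argument. Since $u_{n}$ divides $2\cdot 7^{\alpha_{1}}11^{\beta_{1}}$ (up to sign), a primitive divisor of $u_{n}$, when one exists, is \emph{not} ``a new prime outside $\{2,7,11\}$''; it is forced to \emph{be} one of $2,7,11$, and the contradiction must be extracted from the rank-of-apparition property of a primitive divisor $q$, namely $n\mid q-\left(\frac{(\mu-\bar{\mu})^{2}}{q}\right)$, hence $q\geq n-1$. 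That congruence does dispose of $n>30$, so your BHV step is repairable there; but your treatment of $n\in\{5,7,\dots,29\}$ consults only the table of defective Lucas pairs, which by definition covers only the case in which $u_{n}$ has \emph{no} primitive divisor. For a non-defective $u_{n}$ with small prime $n$ your proposal offers no argument at all.

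Concretely, running the congruence for $q=2,7,11$ gives $n\mid 3$, $n\mid 6$ or $8$, and $n\mid 10$ or $12$ respectively, so the case $q=11$, $d=7$, $n=5$ survives and must be handled separately: the paper shows it forces $v=\pm 7^{\alpha_{1}}$ and reduces \eqref{eq:(3.5)} to $\pm 11^{\beta_{1}}=5u^{4}-70u^{2}v^{2}+49v^{4}$, i.e.\ to determining all $\{7\}$-integral points on the two quartic genus-one curves $11V^{2}=5U^{4}-70U^{2}+49$ and $V^{2}=5U^{4}-70U^{2}+49$, a genuine additional computation (done in MAGMA) before one can conclude there are no solutions. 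As written, your proof is therefore incomplete precisely at $n=5$ (and, formally, at every small prime where $u_{n}$ is not defective). The rest --- the identification of the excluded case with the splitting of $2$ in $\mathbb{Q}(\sqrt{-7})$ when $x$ and $D$ are both odd --- agrees with the paper's Remark \ref{Remark:2}.
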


\begin{proof}
Since in section 2 we have finished the study of equation $x^{2}+7^{\alpha
}\cdot 11^{\beta }=y^{n}$ with $n=3,$ we can assume that $n$ is a prime$~>4$%
. One can write the Diophantine equation \eqref{eq:(1.2)} as $%
x^{2}+dz^{2}=y^{n},$ where 
\begin{equation}
d\in \{1,~7,~11,~77\},~~z=7^{\alpha _{1}}\cdot 11^{\beta _{1}}
\label{eq:(3.1)}
\end{equation}%
the relation of $\alpha _{1}~$and$~\beta _{1}$ with $\alpha $ and $\beta ,$
respectively, is clear. If~$x$ is odd, then by $z$ also being odd we have
that $y$ is even, so $y^{n}\equiv 0~\pmod 8$. As$~x^{2}=z^{2}\equiv $ $1$ $%
\pmod 8~$we have $1+d\equiv $ $0~\pmod 8$, so $d=7$, implying $\alpha \equiv
1$ $\pmod 2$ and $\beta \equiv 0$ $\pmod 2$. This case is excluded in the
lemma. Hence we have that $x$ is even, and $y$ is odd. We study in the field 
${\mathbb{K}}=\mathbb{Q}(i\sqrt{d}).~$As $\gcd (x,z)=1$ standard argument
tells us now that in ${\mathbb{K}}$ we have 
\begin{equation}
(x+i\sqrt{d}z)(x-i\sqrt{d}z)=y^{n},  \label{eq:(3.2)}
\end{equation}%
where the ideals generated by $x+iz\sqrt{d}$ and $x-iz\sqrt{d}$ are coprime
in ${\mathbb{K}}$. Hence, we obtain the ideal equation%
\begin{equation}
\langle x+i\sqrt{d}z\rangle =\theta ^{n}  \label{eq:(3.3)}
\end{equation}%
Then, since the ideal class number of ${\mathbb{K}}$ is $1~$or $8,$ and $n$
is odd, we conclude that the ideal $\theta $ is principal. The cardinality
of the group of units of ${\mathcal{O}}_{{\mathbb{K}}}$ is $2~~$or$~4$, all
coprime to $n$. Furthermore, $\{1,i\sqrt{d}\}$ is always an integral base
for ${\mathcal{O}}_{{\mathbb{K}}}$ except for when $d=7$, and $d=11$, in
which cases an integral basis for ${\mathcal{O}}_{{\mathbb{K}}}$ is $\{1,(1+i%
\sqrt{d})/{2}\}$. Thus, we may assume that%
\begin{equation}
x+i\sqrt{d}z=\varphi ^{n},~\varphi =\frac{u+i\sqrt{d}v}{2}  \label{eq:(3.4)}
\end{equation}%
the relation holds with some algebraic integer $\varphi \in {\mathcal{O}}_{{%
\mathbb{K}}}.$ The algebraic integers in this number field are of the form $%
\varphi =\frac{u+i\sqrt{d}v}{2},$ where $u,v\in 
\mathbb{Z}
,~$with $u,v$ both even, if $d=1,77$ and $u,v$ both odd if $d=7,11$. Note
that%
\begin{equation*}
\varphi -{\overline{\varphi }}=vi\sqrt{d},~\varphi +{\overline{\varphi }}=i%
\sqrt{d}v,~\varphi {\overline{\varphi }}=\frac{u^{2}+dv^{2}}{4}
\end{equation*}%
We thus obtain%
\begin{equation}
\frac{2\cdot 7^{\alpha _{1}}\cdot 11^{\beta _{1}}}{v}=\frac{2z}{v}=\frac{%
\varphi ^{n}-{\overline{\varphi }}^{n}}{\varphi -{\overline{\varphi }}}\in 
\mathbb{Z}.  \label{eq:(3.5)}
\end{equation}%
Let $(L_{m})_{m\geq 0}$ be the sequence with ge$\mathcal{7}$neral term $%
L_{m}=(\varphi ^{m}-{\overline{\varphi }}^{m})/(\varphi -{\overline{\varphi }%
})$ for all $m\geq 0.$ This is called a \textit{Lucas sequence}$.$ Note that%
\begin{equation}
L_{0}=0,L_{1}=1~\text{and~}L_{m}=uL_{m-1}-\frac{u^{2}+dv^{2}}{4}%
L_{m-2},~m\geq 2.  \label{eq:(3.6)}
\end{equation}%
Following the nowadays standard strategy based on the important paper $\cite%
{Bilu},$ we distinguish two cases according as $L_{n}~$has or has not
primitive divisors.

Suppose first that $L_{n}$ has a primitive divisor, say $q$. By definition,
this means that the prime $q$ divides $L_{n}~$and $q$ does not divide $(\mu -%
{\overline{\mu })}^{2}L_{1}...L_{n-1},$ hence%
\begin{equation}
q\nmid (\varphi -{\overline{\varphi })}^{2}L_{1}...L_{4}=(dv^{2}).u.\frac{%
3u^{2}-dv^{2}}{4}.\frac{u^{2}-dv^{2}}{2}.  \label{eq:(3.7)}
\end{equation}%
\qquad \qquad

If $q=2$, then \textit{\eqref{eq:(3.7)}}~implies that $uv$ is odd, hence $%
d=11~$or $77.$ If~$d=11$, then third factor in the right hand-most side of 
\textit{\eqref{eq:(3.7)}}~is even, a contradiction. If $d=77$, then, from 
\textit{\eqref{eq:(3.6)}}~we see that $L_{m}\equiv L_{m-1}\pmod 2,$ hence $%
L_{m}$ is odd for every $m\geq 1$, implying that $2$ cannot be a primitive
divisor of $L_{n}.$

If $q=7$, then \textit{\eqref{eq:(3.7)}} implies that $d=1,11$ and $7$ does
not divide $uv(3u^{2}-dv^{2})(u^{2}-dv^{2}).$ It follows easily then that $%
v^{2}\equiv -du^{2}\pmod 7,$ so that, by \textit{\eqref{eq:(3.6)}}, $%
L_{m}\equiv uL_{m-1}\pmod 8$ for every $m\geq 2$. Therefore, $7\nmid L_{n}$,
so that $7$ can not be a prime divisor of $L_{n}$.

If $q=11,$ then by \textit{\eqref{eq:(3.7)}}, $d=1$ or $7.~$If $d=1~$then we
write $u=2v_{1},v=2v_{1}$ with $u_{1},v_{1}\in 
\mathbb{Z}
$, so that $\varphi =u_{1}+i\sqrt{d}v_{1}$ and \textit{\eqref{eq:(3.7)}}
becomes $q\nmid u_{1}v_{1}(3u_{1}^{2}-dv_{1}^{2})(u_{1}^{2}-dv_{1}^{2}).$
Moreover, $L_{m}=2u_{1}L_{m-1}-(u_{1}^{2}+dv_{1}^{2})L_{m-2}$ for $m\geq 2$.
Note that $\varphi {\overline{\varphi }=u}_{1}^{2}+dv_{1}^{2}\neq 0\pmod 8;$
therefore, by corollary $2.2$ of $\cite{Bilu}$, there exists a positive
integer $m_{11}$ such that $11\mid L_{m_{11}}$ and $m_{11}\mid m$ for every $%
m$ such that$~11\mid L_{m}.~$It follows then that $11\mid \gcd
(L_{n},L_{m_{11}})=L_{\gcd (n,m_{11})}.~$Because of the minimality property
of $m_{11}$, we conclude that $\gcd (n,m_{11}),$ hence, since $n$ is a
prime, $m_{11}=n.$ On the other hand, the Legendre symbol $\left( \frac{%
(\varphi -{\overline{\varphi })}^{2}}{11}\right) =-1,$ hence by Theorem XII
of $\cite{Carmichael}~$(or by theorem 2.2.4 (iv) of $\cite{Luca6}$), $11\mid
L_{12}.~$Therefore $m_{11}\mid 12,~$i.e.~$n\mid 12$, a contradiction, since $%
n$ is a prime$\geq 5.~$If $d=7,$ then \textit{\eqref{eq:(3.7)}} implies $%
11\nmid u_{1}v_{1}(3u_{1}^{2}-dv_{1}^{2})(u_{1}^{2}-dv_{1}^{2}).~$Moreover, $%
L_{m}=2u_{1}L_{m-1}-(u_{1}^{2}+dv_{1}^{2})L_{m-2}$ for $m\geq 2$. Note that $%
\varphi {\overline{\varphi }=u}_{1}^{2}+dv_{1}^{2}\neq 0\pmod 8$; therefore,
by corollary $2.2$ of $\cite{Bilu}$, there exists a positive integer $m_{11}$
such that $11\mid L_{m_{11}}$ and $m_{11}\mid m$ for every $m$ such that$%
~11\mid L_{m}.$It follows then that $11\mid \gcd (L_{n},L_{m_{11}})=L_{\gcd
(n,m_{11})}.$ Because of the minimality property of $m_{11}$, we conclude
that $\gcd (n,m_{11}),$ hence, since $n$ is a prime, $m_{11}=n.$ On the
other hand, the Legendre symbol $\left( \frac{(\varphi -{\overline{\varphi })%
}^{2}}{11}\right) =1,$ hence by Theorem XII of $\cite{Carmichael}$ (or by
theorem 2.2.4 (iii) of $\cite{Luca6}$), $11\mid L_{10}.~$Therefore $%
m_{11}\mid 10,~$i.e.~$n\mid 10$.~Since $n\geq 5$ is a prime, we get that $%
n=5.$

We conclude that $11$ is primitive divisor for $d=7.$

In particular, $u$ and $v$ are integers. Since $11$ is coprime to $%
-4dv^{2}=-28v^{2},$ we get that $v=\pm 7^{\alpha _{1}}.~$Since $%
y=u^{2}+7v^{2},$ we get that $u$ is even.

In the case $v=\pm 7^{\alpha _{1}}$, equation \textit{\eqref{eq:(3.5)}}
becomes 
\begin{equation*}
\pm 11^{\beta _{1}}=5u^{4}-70u^{2}v^{2}+49v^{4}.
\end{equation*}%
Since $u$ is even, it follows that the right hand side of the last equation
above is congruent to $1\pmod 8$. So $\pm 11^{\beta _{1}}\equiv 1\pmod 8$,
showing that the sign on the left hand side is positive and $\beta _{1}$ is
odd, or the sign on the left hand side is negative and $\beta _{1}$ is even.

Assume first that $\beta _{1}=2\beta _{0}+1$ be odd. We get 
\begin{equation*}
11V^{2}=5U^{4}-70U^{2}+49,
\end{equation*}%
where $(U,V)=({u}/{v},{11^{\beta _{0}}}/{v^{2}})$ is a $\{7\}$-integral
point on the above elliptic curve. We get that the only such points on the
above curve are $(U,V)=(\pm 7,\pm 28).~$This does not lead to solutions of
our original equation.

Assume now that $\beta _{1}=2\beta _{0}$ is even and we get that 
\begin{equation*}
V^{2}=5U^{4}-70U^{2}+49,
\end{equation*}%
where $(U,V)=({u}/{v},{11^{\beta _{0}}}/{v^{2}})$ is a $\{7\}$-integral
point on the above elliptic curve. With MAGMA, we get that the only such
point on the above curve are $(U,V)=(0,7)$. This does not lead to solutions
of our original equation.

We now recall that a particular instance of the Primitive Divisor Theorem
for Lucas sequences implies that, if $n\geq 5$ is prime, then $L_{n}$ always
has a prime factor except for finitely many \textit{exceptional triples} $%
(\varphi ,{\overline{\varphi },n)}$, and all of them appear in the Table 1
in \cite{Bilu} (see also \cite{Abouzaid}). These exceptional Lucas numbers
are called \textit{defective.}

Let us assume that we are dealing with a number $L_{n}$ without primitive
divisors. Then a quick look at Table 1 in \cite{Bilu} reveals that this is
impossible. Indeed, all exceptional triples have $n=5,7$ or $13$. The
defective Lucas numbers whose roots are in ${\mathbb{K}}={\mathbb{Q}}(i\sqrt{%
d})$ with $d=7$ and $n=5,7$ or $13$ appearing in the list \textit{%
\eqref{eq:(3.1)}} is $(\varphi ,~{\overline{\varphi }})=((1+i\sqrt{7}%
)/2,~(1-i\sqrt{7})/2)~$ for which $L_{7}=7,~L_{13}=-1.~$Furthermore, with
such a value for $\varphi ~$we get that $y=|\varphi |^{2}=2$. However, this
is not convenient since for us $\ x$ and $y$ are coprime so $y$ cannot be
even. For $n=5$ and $d=11,$we get $L_{5}=1$ and $y=3$ with $(\varphi ,~{%
\overline{\varphi }})=((1+i\sqrt{11})/2,~(1-i\sqrt{11})/2).~$Therefore the
equation is $x^{2}+C=3^{5}$, where $C=7^{\alpha }\cdot 11^{\beta }$, with $a$
even and $b$ odd. Since $11^{3}>3^{5},$ we have $b=1,$ and next that $a=0.~$%
But it doesn't yield an integer value for $x.$ The proof is completed.
\end{proof}

\begin{remark}
\label{Remark:2}We mention here why the method applied for the proof of
Lemma \ref{Lemma:3} does not apply when $\alpha ~$and$~x$ are odd, $\beta $
is even. In this case $d=7,$ the class number of $%
\mathbb{Q}
(\sqrt{7}i)~$is $1$.~With $\omega =\frac{1+\sqrt{7}i}{2}$~a prime dividing $%
2 $, and $\omega ^{\prime }$~its conjugate, let us now write $(x+z\sqrt{7}%
i)=\omega ^{b}\omega ^{c}\xi $, where $\xi $ is an integer in $%
\mathbb{Q}
(\sqrt{7}i)$ of odd norm, not divisible by $7$ and $\xi ^{\prime }$ its
conjugate.~As both $x$ and $z$ are odd and they are coprime, we may take $%
c=1 $, $b\geq $ $1$. Taking norms we get $y^{n}=$ $2^{b+1}\xi \xi ^{\prime }$%
, and it easily follows that $\xi =c^{n}$ and $b+1=k.n$.~Now we take $%
\varphi =2^{k-1}c$, $\wp =2\omega ^{n-2}$,~and then we have $x+z\sqrt{7}%
i=\wp \varphi ^{n}$. A way to look at the rest of argument why this case is
essentially different from the primitive divisors in Lucas sequences thing:
From $x+z\sqrt{7}i=\wp \varphi ^{n}$ and its conjugate it follows that%
\begin{equation*}
z=\frac{\wp \varphi ^{n}-{\overline{\wp }~\overline{\varphi }}^{n}}{2\sqrt{7}%
i}
\end{equation*}%
If $\wp $ is in $%
\mathbb{Q}
$ then the right hand side is the n-th term of a Lucas sequence. As $z$ has
a very nice prime factorization $7^{p}11^{q}$ then theory of primitive
divisors will work. But in our case $\wp $ is not in $%
\mathbb{Q}
.$ Hence the right side, while it is the n-th term of a recurrence sequence,
this is not a Lucas sequence, and does not have the nice divisibility
properties of Lucas sequences. That's why the method of \cite{Bilu}~fails in
our case.
\end{remark}

\begin{acknowledgement}
I thank Professor Benne de Weger for many helpful suggestions and generous assistance during the preparation of this paper and I also would like to thank him for his hospitality during my visit at the Faculty of Mathematics and Computer Science of Eindhoven University of Technology in 2009. I thank Professor Nikos Tzanakis and my PhD advisor Professor Ismail Naci Cangul for their valuable suggestions about the plan of the paper. I thank Professor Steve Donnelly for valuable suggestions about MAGMA computations.
\end{acknowledgement}

\bigskip

\bigskip

\begin{tabular}{l}
G\"{o}khan Soydan \\ 
Isiklar Air Force High School, \\ 
16039 Bursa, TURKEY \\ 
gsoydan@uludag.edu.tr%
\end{tabular}

\end{document}